\documentclass[11pt]{article}
\usepackage[margin=1in]{geometry}
\usepackage{amsmath,amssymb,amsthm}
\usepackage{booktabs}
\usepackage{graphicx}
\usepackage[dvipsnames]{xcolor}
\usepackage[colorlinks=true,linkcolor=MidnightBlue,citecolor=MidnightBlue,
            urlcolor=MidnightBlue]{hyperref}
\usepackage{url}
\usepackage{tikz}

\newtheorem{theorem}{Theorem}

\newtheorem{corollary}{Corollary}
\newtheorem{conjecture}{Conjecture}
\theoremstyle{definition}
\newtheorem{definition}{Definition}

\theoremstyle{remark}
\newtheorem{remark}{Remark}

\newcommand{\mustar}{\mu^{*}}
\title{Sharp bounds between the saturation number\\
       and the harmonic index}
\author{Chakshu Gupta\\[2pt]
  {\small College of Computing, Georgia Institute of Technology}\\
  {\small \texttt{cgupta65@gatech.edu}}}
\date{}

\begin{document}
\maketitle

\begin{abstract}
The saturation number~$\mustar(G)$ of a graph~$G$ is the minimum cardinality
of a maximal matching, and $H(G)$ is its harmonic index.  TxGraffiti conjectured
in 2023 that $\mustar(G) \le H(G)$ for every nontrivial connected graph~$G$, and
B\i y\i ko\u{g}lu refuted this by showing that the ratio $\mustar(G)/H(G)$ can
be made arbitrarily large.  Restricting to trees bounds the ratio sharply.
Every nontrivial tree~$T$ satisfies $\mustar(T) < \tfrac{3}{2} H(T)$, with the
constant $3/2$ best possible.  A complementary bound $H(G) < 4\mustar(G)$ holds
for every graph with an edge, so on a nontrivial tree the saturation number is
pinned to $\tfrac14 H(T) < \mustar(T) < \tfrac{3}{2} H(T)$, both constants best
possible.  The friendship graph~$F_4$ is a smallest counterexample to the
conjecture, on nine vertices, and the smallest tree counterexample is the
subdivided star on eleven vertices.  For each positive integer~$m$ a family of
graphs with $m$ hubs has ratio approaching $m+1$, while the conjecture holds
whenever all vertices have equal degree.  Both invariants arise in applications,
the harmonic index as a molecular descriptor and the saturation number as a
measure of adsorption inefficiency, and the bounds estimate the latter, which is
NP-hard to compute, by the former, which is computable in linear time.
\end{abstract}

\section{Introduction}\label{sec:intro}

Fajtlowicz's Graffiti~\cite{Fajtlowicz1988graffiti} makes conjectures by
searching a database of graphs for a relation between invariants, usually an
inequality, that no stored graph violates, then offering the surviving
relation as a conjecture.  The system TxGraffiti~\cite{Davila2026automated},
inspired by Graffiti, has produced conjectures that became published theorems
relating domination numbers, zero forcing numbers, and matching parameters of
graphs.  Davila, Brimkov, and Pepper~\cite{Davila2025reverie} selected
four of its most durable open conjectures.  The present paper concerns the
fourth and most recent of these, posed in 2023.  All graphs considered here
are finite and simple.  For a graph $G$, $E(G)$ is its edge set, $n$ the
number of vertices, and $d(v)$ the degree of a vertex~$v$.

\medskip
\noindent\textbf{Conjecture}
(\cite[Conjecture~4]{Davila2025reverie})\textbf{.}
\textit{Let $G$ be a nontrivial connected graph.  The saturation
number~$\mustar(G)$, the minimum cardinality of a maximal matching in~$G$,
satisfies $\mustar(G) \le H(G)$, where
$H(G) = \sum_{\{u,v\} \in E(G)} 2/(d(u)+d(v))$ is the harmonic index.}
\medskip

\noindent
The conjecture was checked against a dataset of 335 connected
graphs~\cite{Davila2025reverie} without a counterexample.  It was
subsequently disproved by
B\i y\i ko\u{g}lu~\cite{Biyikoglu2026note}, who showed that the join of a
disjoint union of edges with an independent set satisfies $\mustar > H$ for
a range of sizes, with unbounded violation ratio $\mustar/H$.  Each such join
contains a triangle, since every vertex of the independent set is adjacent to
both endpoints of every edge of the union.

\begin{figure}[ht]
\centering
\begin{tikzpicture}[
  v/.style={circle,fill=black,inner sep=1.6pt},
  medge/.style={line width=1.2pt},
  jedge/.style={gray!55,line width=0.4pt},
  scale=0.85,
]
\coordinate (a1) at (0,3);
\coordinate (b1) at (0,2);
\coordinate (a2) at (0,1);
\coordinate (b2) at (0,0);
\coordinate (z1) at (3.2,2.3);
\coordinate (z2) at (3.2,0.7);
\fill[blue!9] (a1) -- (b1) -- (z1) -- cycle;
\foreach \x in {a1,b1,a2,b2} {
  \draw[jedge] (\x) -- (z1);
  \draw[jedge] (\x) -- (z2);
}
\draw[medge] (a1) -- (b1);
\draw[medge] (a2) -- (b2);
\draw[blue!70!black,line width=0.9pt] (a1) -- (b1) -- (z1) -- cycle;
\node[v,label=left:$a_1$] at (a1) {};
\node[v,label=left:$b_1$] at (b1) {};
\node[v,label=left:$a_2$] at (a2) {};
\node[v,label=left:$b_2$] at (b2) {};
\node[v,label=right:$z_1$] at (z1) {};
\node[v,label=right:$z_2$] at (z2) {};
\node[font=\small] at (0,-0.85) {$2K_2$};
\node[font=\small] at (3.2,-0.85) {$I$};
\end{tikzpicture}
\caption{The join of a disjoint union of edges with an independent set, drawn
for two edges and two independent vertices $z_1, z_2$.  Thick edges form the
matching and thin edges the complete join.  Each matching edge together with each
$z_j$ spans a triangle (one shaded).  The separation $\mustar > H$ appears
only for many edges, and this small instance has $\mustar = 2 < H$.}
\label{fig:join}
\end{figure}

The saturation
number has been studied as a graph
invariant~\cite{Alikhani2017saturation},\footnote{In extremal graph theory the
term \emph{saturation number} denotes a different quantity, the minimum number
of edges in an $H$-saturated graph. The present usage, the minimum cardinality
of a maximal matching, is the one of matching
theory~\cite{Alikhani2017saturation, Ahmadi2017saturation,
TavakoliDoslic2023smm}.}
and satisfies the identity $\mustar(G) = i(L(G))$, where $i(L(G))$ is the
independent domination number of the line
graph~\cite{Davila2025reverie, TavakoliDoslic2023smm}.  The harmonic index was
introduced by Fajtlowicz~\cite{Fajtlowicz1987harmonic} and has since been
studied in chemical and extremal graph
theory~\cite{Zhong2012harmonic, Ilic2012note}.  Prior work relates the harmonic
index to the
matching number~$\nu(G)$, the maximum cardinality of a matching, by
determining the trees and unicyclic graphs of least harmonic index among those
with a prescribed matching number~\cite{Lv2014trees, Lv2014unicyclic}; the
general sum-connectivity index~$\chi_a$ for $a \ge 0$ is likewise maximised over
bipartite graphs with a given vertex cover number~\cite{Vetrik2023general}, the harmonic
index corresponding to $a = -1$.  These extremal questions, with a parameter
held fixed, differ from bounding one invariant by the other.  The saturation
number is moreover a different and smaller parameter, with
$\mustar(G) \le \nu(G)$ for every graph.  For the
path~$P_4$ the three invariants are $\mustar = 1$, $\nu = 2$, and $H = 11/6$,
so $\mustar < \nu$ while $\nu$ itself exceeds $H$.  Beyond the conjecture
itself, no bound between the harmonic index and the saturation number
appears to have been established; in particular, none appears in the survey of
harmonic-index bounds~\cite{AliZhongGutman2019harmonic}.
Theorem~\ref{thm:lower} bounds it below for every graph, and
Theorem~\ref{thm:treebound} above for trees.

Every nontrivial tree~$T$ satisfies
$\mustar(T) < \tfrac{3}{2} H(T)$ (Theorem~\ref{thm:treebound}), and the constant
$3/2$ cannot be lowered, since the subdivided stars approach it (Theorem~\ref{thm:tree}).
The harmonic index also bounds the saturation number from below, with
$H(G) < 4\mustar(G)$ for every graph with an edge (Theorem~\ref{thm:lower}),
sharp for the balanced double stars, so a nontrivial tree satisfies
$\tfrac14 H(T) < \mustar(T) < \tfrac{3}{2} H(T)$.
The subdivided star~$S_5$, on eleven vertices, is the smallest tree on which the
conjecture fails.  The friendship graph~$F_4$ is a counterexample on nine
vertices, the smallest order admitting one (Theorem~\ref{thm:Fk}), and no
counterexample of maximum degree at most four arises on up to eleven vertices.
For every
positive integer~$m$ a family of graphs with $m$ hubs has $\mustar/H$ tending
to $m+1$ (Theorem~\ref{thm:unbounded}).  In the opposite direction, the
conjecture holds for every regular graph (Theorem~\ref{thm:regular}).
Section~\ref{sec:discussion} relates both invariants to their origins outside
graph theory.

\section{Trees}\label{sec:trees}

\begin{definition}[Subdivided star]\label{def:Sk}
For $k \ge 1$, the subdivided star~$S_k$ is the star $K_{1,k}$ with every
edge subdivided once.  It has a hub of degree~$k$ and $k$~legs.  Each leg is
a path from the hub through a middle vertex of degree~$2$ to a leaf of
degree~$1$.  Thus $S_k$ has $n = 2k+1$ vertices and $2k$ edges, and it is a
tree, hence bipartite and triangle-free.
\end{definition}

\begin{figure}[ht]
\centering
\begin{tikzpicture}[
  v/.style={circle,fill=black,inner sep=1.6pt},
  leg/.style={line width=0.9pt},
  scale=0.8,
]
\coordinate (c) at (0,0);
\foreach \ang in {90,162,234,306,18} {
  \coordinate (a-\ang) at (\ang:1.3);
  \coordinate (b-\ang) at (\ang:2.6);
  \draw[leg] (c) -- (a-\ang) -- (b-\ang);
}
\foreach \ang in {162,234,306,18} {
  \node[v] at (a-\ang) {};
  \node[v] at (b-\ang) {};
}
\node[v,label={right:$m_1$}] at (a-90) {};
\node[v,label={right:$\ell_1$}] at (b-90) {};
\node[v,label={below:$c$}] at (c) {};
\end{tikzpicture}
\caption{The subdivided star $S_5$ on eleven vertices, a tree counterexample
with $\mustar = 5 > 100/21 = H$.  Being acyclic, it contains no triangle.  One leg
is labelled with its middle vertex~$m_1$ and leaf~$\ell_1$.}
\label{fig:star}
\end{figure}

\begin{theorem}[Subdivided star]\label{thm:tree}
For the subdivided star~$S_k$ ($k \ge 1$),
\[
  \mustar(S_k) = k
  \qquad\text{and}\qquad
  H(S_k) = \frac{2k}{k+2} + \frac{2k}{3}.
\]
Consequently $\mustar(S_k) \le H(S_k)$ if and only if $k \le 4$, with
equality at $k = 4$ ($S_4$, nine vertices), and
$\mustar(S_k)/H(S_k) \to 3/2$ as $k \to \infty$.  In particular, the
conjecture fails for trees, and hence for triangle-free graphs and for
bipartite graphs.
\end{theorem}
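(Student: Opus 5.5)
The plan is to compute the two invariants separately and then compare them by elementary algebra.

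For $H(S_k)$, split the edge set into the $k$ hub edges and the $k$ pendant edges. Each hub edge joins the hub (degree $k$) to a middle vertex (degree $2$), so it contributes $2/(k+2)$. Each pendant edge joins a middle vertex (degree $2$) to a leaf (degree $1$), so it contributes $2/3$. Summing gives $H(S_k)=2k/(k+2)+2k/3$.

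For $\mustar(S_k)=k$ we need two inequalities.
\begin{itemize}
\item \emph{Upper bound.} The $k$ pendant edges $m_i\ell_i$ form a matching. It is maximal, because every hub edge $cm_i$ meets $m_i$. Hence $\mustar(S_k)\le k$.
\item \emph{Lower bound.} Let $M$ be any maximal matching. The hub lies in at most one edge of $M$, so at most one hub edge belongs to $M$. Consider any leg $i$ whose hub edge $cm_i$ is not in $M$. By maximality, the pendant edge $m_i\ell_i$ must be covered, and since $\ell_i$ is a leaf this forces $m_i$ to be matched. The only other edge at $m_i$ is $cm_i\notin M$, so $m_i\ell_i\in M$.
\end{itemize}
Thus $M$ contains every pendant edge except possibly the one on the leg whose hub edge lies in $M$. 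In that case the hub edge replaces it, so $|M|\ge k$ either way. This lower bound is the only step with real content, and it is a short local argument at each middle vertex.

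For the comparison, $k\le H(S_k)$ is equivalent to $1\le 2/(k+2)+2/3$, that is, $1/3\le 2/(k+2)$, that is, $k\le 4$. Equality holds exactly at $k=4$, where $H(S_4)=4/3+8/3=4$. For the asymptotics,
\[
  \frac{\mustar(S_k)}{H(S_k)} \;=\; \frac{1}{\,2/(k+2)+2/3\,} \;\longrightarrow\; \frac{3}{2}
  \qquad (k\to\infty).
\]

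The final clause follows from Definition~\ref{def:Sk}. There $S_k$ is a tree, hence bipartite and triangle-free, so $S_5$, with $\mustar=5>100/21=H$, refutes the conjecture simultaneously for trees, triangle-free graphs and bipartite graphs.
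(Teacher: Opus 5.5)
Your proof is correct and follows the paper's own argument essentially step for step. The pendant edges give a maximal matching of size $k$, and a local forcing argument at each leg not using a hub edge gives $|M|\ge k$. The same per-edge weights and the reduction to $1/3\le 2/(k+2)$ then give the threshold $k\le 4$, equality at $k=4$, and the limit $3/2$.
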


\begin{proof}
For the saturation number, the $k$~outer edges $\{m_i, \ell_i\}$, joining
each middle vertex~$m_i$ to its leaf~$\ell_i$, saturate every middle and leaf
vertex.  The only unsaturated vertex is the hub, and each of its
neighbours~$m_i$ is saturated, so these $k$~edges form a maximal matching and
$\mustar(S_k) \le k$.  For the lower bound, let $M$ be any maximal matching.
The hub is incident only to the edges $\{\text{hub}, m_i\}$, so $M$ saturates
the hub through at most one leg.  For every leg~$i$ not carrying that hub
edge, neither $m_i$ nor $\ell_i$ is saturated by a hub edge, and $\ell_i$ has
no other neighbour, so maximality forces $\{m_i, \ell_i\} \in M$, since
otherwise this edge could be added.  Hence $M$ contains the outer edge of
all but at most one leg, and the remaining leg contributes its hub edge, so
$|M| \ge k$.  Therefore $\mustar(S_k) = k$.

For the harmonic index, each of the $k$~edges $\{\text{hub}, m_i\}$ joins the
degree-$k$ hub to a degree-$2$ middle vertex, contributing $2/(k+2)$, and
each of the $k$~outer edges joins a degree-$2$ to a degree-$1$ vertex,
contributing $2/3$.  Summing gives $H(S_k) = 2k/(k+2) + 2k/3$.

The inequality $\mustar(S_k) \le H(S_k)$ reads
$k \le 2k/(k+2) + 2k/3$, that is, $1/3 \le 2/(k+2)$, which holds if and only
if $k \le 4$, with equality at $k = 4$.  Dividing,
$\mustar(S_k)/H(S_k) = 1/[\,2/(k+2) + 2/3\,]$, which is strictly below $3/2$
for every finite~$k$ and tends to $3/2$ as $k \to \infty$.  As a tree, $S_k$
is bipartite and triangle-free.
\end{proof}

The smallest tree counterexample is $S_5$ on eleven vertices.  An exhaustive
enumeration with \texttt{geng}~\cite{McKay2014nauty} of all connected
triangle-free graphs and of all connected bipartite graphs on at most ten
vertices finds no counterexample in either class, and among the trees on
eleven vertices $S_5$ is the
only one.\footnote{\url{https://github.com/ChakshuGupta13/lab}}  In each class
the smallest counterexample has eleven vertices. The subdivided star shows that trees can violate the conjecture, but the
violation is bounded.

\begin{theorem}[Tree bound]\label{thm:treebound}
Every nontrivial tree~$T$ satisfies $\mustar(T) < \tfrac{3}{2} H(T)$, and the
constant~$3/2$ is best possible.
\end{theorem}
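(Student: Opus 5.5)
The plan is induction on the order of $T$, peeling off a pendant structure found at the end of a longest path. The constant is best possible by Theorem~\ref{thm:tree}, since $\mustar(S_k)/H(S_k)\to 3/2$. So only the strict inequality $\mustar(T) < \tfrac32 H(T)$ needs proof, and it is equivalent to
\[
  H(T) > \tfrac23\,\mustar(T).
\]
\emph{Base cases.} For stars $K_{1,k}$ we have $\mustar=1$ and $H = 2k/(k+1)\ge 1 > 2/3$. Double stars and other trees of diameter at most $3$ will be checked directly, and I expect them all to have $H \ge 1$ with $\mustar \le 1$ or $2$ as appropriate.

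\emph{Matching step.} Let $x_0x_1\cdots x_D$ be a longest path, with $D\ge 4$. Put $v=x_{D-1}$ and $w=x_{D-2}$. By maximality of the path, every child of $v$ is a leaf, and every child of $w$ is either a leaf or a support vertex all of whose children are leaves. The basic reduction deletes $v$ together with its leaf children, giving a tree $T'$. If $M'$ is a maximal matching of $T'$, then $M'\cup\{v\ell\}$ is maximal in $T$ for any leaf $\ell$ of $v$: every new edge meets the saturated vertex $v$. Hence $\mustar(T)\le \mustar(T')+1$. By induction it suffices to show
\[
  H(T)-H(T') \ge \tfrac23,
\]
or to carry a slightly strengthened hypothesis if necessary.

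\emph{The deficit.} The edges at $v$ contribute
\[
  (d(v)-1)\frac{2}{d(v)+1}+\frac{2}{d(v)+d(w)} .
\]
The first term is already at least $2/3$ whenever $d(v)\ge 2$. The loss comes from the other edges at $w$: deleting $v$ lowers $d(w)$ by one, which increases each term $2/(d(w)+d(x))$ at $w$. This is the main obstacle, because when $d(v)=2$ and $w$ has many neighbours the net change can fall below $2/3$. My first fix is to remove a larger block: delete all of $w$'s children, and their leaves, simultaneously.

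\emph{Larger block.} Suppose $w$ has $s$ support children, each with a maximal matching edge to one of its leaves, and $t$ leaf children. Then $T''=T-(\text{descendants of }w)$ satisfies
\[
  \mustar(T)\le \mustar(T'')+s+[\,t\ge 1 \text{ and } s=0\,].
\]
Only $w$'s edge to its parent $x_{D-3}$, and the edges at $x_{D-3}$, change weight. The removed edges contribute at least $s\bigl(\tfrac23+\tfrac{2}{2+d(w)}\bigr)$ plus the leaf edges at $w$, where $d(w)=s+t+1$ in $T$. The single perturbed edge $wx_{D-3}$ changes by a bounded amount, and I would bound the gain at $x_{D-3}$ by at most $1/3$ in total by comparing $2/(a+b)$ with $2/(a+b-\delta)$. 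If this still fails, for instance when $s=1$ and $t=0$ so that $w$ has degree $2$, the fallback is to absorb $w$ into the block as well, matching $v\ell$ and deleting $v$, its leaves and $w$. The deficit is then at least $2/3+2/(2+d(w))$ minus a single-edge perturbation at $x_{D-3}$, which is at most $1/3$. If even that is too tight, strengthen the inductive statement to $\mustar(T)\le \tfrac32 H(T)-c$ for a small constant $c>0$ inherited from the base cases, which also yields strictness.
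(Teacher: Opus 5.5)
There is a genuine gap: the inequality that carries the induction is never established. Your basic reduction is sound as far as it goes. You delete $v=x_{D-1}$ and its leaf children, and you get $\mustar(T)\le\mustar(T')+1$ by extending a maximal matching of $T'$ with one edge $v\ell$. You then assert that $H(T)-H(T')$ can drop below $\tfrac23$ when $d(v)=2$ and $d(w)$ is large, and you abandon the reduction for a series of conditional fallbacks (``if this still fails'', ``if even that is too tight'') that are never carried out. That assertion is false, and the step you abandoned is exactly the paper's.

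Write $t=d(v)-1\ge 1$ and $q=d(w)\ge 2$. Each edge $wz$ with $z\ne v$ rises in weight by $2/[(q-1+d(z))(q+d(z))]\le 2/[q(q+1)]$, the worst case being a leaf $z$. Hence
\[
  H(T)-H(T') \ \ge\ \frac{2t}{t+2}+\frac{2}{t+1+q}-\frac{2(q-1)}{q(q+1)} .
\]
For $t=1$ the last two terms combine to $4/[q(q+1)(q+2)]>0$, because $q(q+1)-(q-1)(q+2)=2$. For $t\ge 2$ the first term exceeds $\tfrac23$ by at least $\tfrac13$, while $2(q-1)/[q(q+1)]\le\tfrac13$ because $(q-2)(q-3)\ge 0$. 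So $H(T)-H(T')>\tfrac23$ in every case, and the induction closes from the star base case. The paper runs this same step with $\nu$ in place of $\mustar$, proving the stronger $2\nu(T)<3H(T)$. Your direct extension of a maximal matching works equally well.

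The replacement sketch is also incorrect as written. In $T''$ the vertex $w$ survives, so $d(x_{D-3})$ is unchanged and only the single edge $wx_{D-3}$ changes weight. Its increase is $2/(1+d(x_{D-3}))-2/(d(w)+d(x_{D-3}))$. With $d(x_{D-3})=2$ this tends to $\tfrac23$ as $d(w)\to\infty$, so your claimed bound of $\tfrac13$ on the gain fails once $d(w)\ge 5$.

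The matching bound needs care too. If $w$ is unsaturated by $M''$ and has leaf children, adding the edges $v_i\ell_i$ leaves the pendant edges at $w$ uncovered. Likewise, in the fallback that deletes $w$, the edge $wx_{D-3}$ is uncovered when $x_{D-3}$ is unsaturated. In both cases you must use an edge $wv$ rather than $v\ell$.

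Finally, strengthening to $\mustar(T)\le\tfrac32 H(T)-c$ cannot rescue a step that loses. The number of inductive steps is unbounded, so any fixed slack $c$ is eventually used up. The strengthening only helps once every step is shown to be non-losing, which is exactly the missing inequality.
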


\begin{proof}
Since every maximal matching is a matching, $\mustar(T) \le \nu(T)$, the
matching number.  It therefore suffices to prove the stronger inequality
$2\nu(T) < 3H(T)$ for every nontrivial tree~$T$, by strong
induction on $|V(T)|$.

\emph{Base case.}  If $T$ has no path on four vertices, it is a star
$K_{1,k}$ with $k \ge 1$, a centre joined to $k$ leaves.  Every edge meets
the centre, so a matching uses at most one of them and $\nu(T) = 1$.  Each
edge joins the centre of degree~$k$ to a leaf of degree~$1$, so
$H(T) = k \cdot 2/(k+1) = 2k/(k+1)$.  Hence
$3H(T) - 2\nu(T) = (4k-2)/(k+1) > 0$.

\emph{Inductive step.}  Otherwise $T$ has a path on four vertices.
Fix a longest one, $v_0 v_1 \cdots v_L$ with $L \ge 3$, and write $s = v_1$
and $p = v_2$.  By
maximality of the path, every neighbour of $s$ other than $p$ is a leaf,
since a non-leaf neighbour would extend the path beyond $v_0$.  Let
these leaves be $\ell_1, \ldots, \ell_t$, so $v_0$ is among them, $t \ge 1$, and
$d(s) = t+1$.  Since $p$ is adjacent to both $s$ and $v_3$, its degree
$D := d(p)$ is an integer at least~$2$.  Let
$T^- = T - \{s, \ell_1, \ldots, \ell_t\}$.
Each $\ell_i$ has $s$ as its only neighbour, and $s$ is otherwise adjacent
only to $p$, so $sp$ is the only edge joining the removed vertices to the
rest of $T$.  Hence $T^-$ is a tree containing $p$ and $v_3$, nontrivial and
smaller than~$T$.

The leaf $\ell_1$ has $s$ as its only neighbour, so some maximum matching
contains $s\ell_1$.  Deleting $s$ and $\ell_1$ then isolates
$\ell_2, \ldots, \ell_t$, so $\nu(T) = \nu(T^-) + 1$.  For the harmonic
index, the only surviving vertex whose degree changes is $p$, falling from
$D$ to $D-1$.  The deleted edges are the $t$ leaf edges $s\ell_i$, each of
weight $2/(t+2)$, and the edge $sp$, of weight $2/(t+1+D)$.  Write
$z_1, \ldots, z_{D-1}$ for the neighbours of $p$ other than $s$.  Each
remaining edge $pz_j$ rises in weight from $2/(D+d(z_j))$ to
$2/(D-1+d(z_j))$.  Let $\Lambda$ be the total of these increases:
\[
  \Lambda = \sum_{j=1}^{D-1}
      \left( \frac{2}{D-1+d(z_j)} - \frac{2}{D+d(z_j)} \right).
\]
Then
\[
  H(T) - H(T^-) = \frac{2t}{t+2} + \frac{2}{t+1+D} - \Lambda.
\]
Each summand of $\Lambda$ combines into a single fraction, which $d(z_j) \ge 1$
bounds:
\begin{align*}
  \frac{2}{D-1+d(z_j)} - \frac{2}{D+d(z_j)}
    &= \frac{2}{(D-1+d(z_j))(D+d(z_j))} \\
    &\le \frac{2}{D(D+1)}.
\end{align*}
Summing the $D-1$ terms gives $\Lambda \le 2(D-1)/[D(D+1)]$, so
\[
  H(T) - H(T^-) \ge \frac{2t}{t+2} + \frac{2}{t+1+D}
                     - \frac{2(D-1)}{D(D+1)}.
\]
It remains to bound the right-hand side.  The values $t$ and $D$ depend
on~$T$, but satisfy $t \ge 1$ and $D \ge 2$, so it suffices to show the
right-hand side exceeds $\tfrac{2}{3}$ for all such integers.  Since
$\tfrac{2t}{t+2} - \tfrac{2}{3} = \tfrac{4(t-1)}{3(t+2)}$, this reduces to
\[
  \frac{4(t-1)}{3(t+2)} + \frac{2}{t+1+D}
    - \frac{2(D-1)}{D(D+1)} > 0.
\]
For $t = 1$ the first term vanishes, and
\[
  \frac{2}{D+2} - \frac{2(D-1)}{D(D+1)} = \frac{4}{D(D+1)(D+2)} > 0,
\]
using $D(D+1) - (D-1)(D+2) = 2$.  For $t \ge 2$,
\[
  \frac{4(t-1)}{3(t+2)} \ge \frac{1}{3}
  \qquad\text{and}\qquad
  \frac{2(D-1)}{D(D+1)} \le \frac{1}{3},
\]
the first with equality at $t = 2$, the second because
$(D-2)(D-3) \ge 0$ for every integer $D \ge 2$.  The middle term
$2/(t+1+D)$ is positive, so the sum is at least
$\tfrac{1}{3} + 2/(t+1+D) - \tfrac{1}{3} > 0$.  Thus
$H(T) - H(T^-) > \tfrac{2}{3}$.

Combining the two drops,
\begin{align*}
  3H(T) - 2\nu(T)
    &= \bigl(3H(T^-) - 2\nu(T^-)\bigr)
      + 3\bigl(H(T) - H(T^-)\bigr) - 2\bigl(\nu(T) - \nu(T^-)\bigr) \\
    &> \bigl(3H(T^-) - 2\nu(T^-)\bigr) + 3 \cdot \tfrac{2}{3} - 2,
\end{align*}
which equals $3H(T^-) - 2\nu(T^-) > 0$ by the induction hypothesis.  Hence
$2\nu(T) < 3H(T)$, and so $\mustar(T) \le \nu(T) < \tfrac{3}{2} H(T)$.
By Theorem~\ref{thm:tree}, $\mustar(S_k)/H(S_k) \to \tfrac{3}{2}$ as
$k \to \infty$, so no smaller constant suffices.
\end{proof}

\section{General graphs}\label{sec:general}

The friendship graph~$F_k$, also called the windmill graph, consists of
$k$~triangles sharing a single hub vertex.  It is the case of
B\i y\i ko\u{g}lu's construction in which the independent set is a single
vertex, and has $n = 2k+1$ vertices and $3k$ edges, one hub of degree~$2k$
and $2k$ rim vertices of degree~$2$.

\begin{figure}[ht]
\centering
\begin{tikzpicture}[
  v/.style={circle,fill=black,inner sep=1.6pt},
  medge/.style={line width=1.2pt},
  jedge/.style={gray!55,line width=0.4pt},
  scale=0.8,
]
\coordinate (z) at (0,0);
\coordinate (a1) at (2,0.55);
\coordinate (b1) at (2,-0.55);
\coordinate (a2) at (-0.55,2);
\coordinate (b2) at (0.55,2);
\coordinate (a3) at (-2,-0.55);
\coordinate (b3) at (-2,0.55);
\coordinate (a4) at (0.55,-2);
\coordinate (b4) at (-0.55,-2);
\fill[blue!9] (z) -- (a1) -- (b1) -- cycle;
\foreach \x in {a1,b1,a2,b2,a3,b3,a4,b4} { \draw[jedge] (z) -- (\x); }
\draw[medge] (a1) -- (b1);
\draw[medge] (a2) -- (b2);
\draw[medge] (a3) -- (b3);
\draw[medge] (a4) -- (b4);
\draw[blue!70!black,line width=0.9pt] (z) -- (a1) -- (b1) -- cycle;
\foreach \x in {a2,b2,a3,b3,a4,b4} { \node[v] at (\x) {}; }
\node[v,label=right:$a_1$] at (a1) {};
\node[v,label=right:$b_1$] at (b1) {};
\node[v,label={[xshift=-2pt]below left:$z$}] at (z) {};
\end{tikzpicture}
\caption{The friendship graph $F_4$, a smallest counterexample, on nine
vertices, formed by four triangles sharing a single hub $z$, giving
$\mustar = 4 > 18/5 = H$.  The shaded triangle is one rim pair $a_1 b_1$ with
the hub.}
\label{fig:friendship}
\end{figure}

\begin{theorem}[Saturation number and harmonic index of~$F_k$]\label{thm:Fk}
For the friendship graph $F_k$ ($k \ge 1$),
\[
  \mustar(F_k) = k
  \qquad\text{and}\qquad
  H(F_k) = \frac{2k}{k+1} + \frac{k}{2}.
\]
Consequently, $\mustar(F_k) \le H(F_k)$ if and only if $k \le 3$, with
equality at $k = 3$ ($F_3$, seven vertices).
\end{theorem}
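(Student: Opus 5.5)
The proof will mirror that of Theorem~\ref{thm:tree}: an upper bound on $\mustar(F_k)$ from an explicit maximal matching, a matching lower bound from a maximality argument, a direct count for $H(F_k)$, and a one-variable inequality at the end.

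Label the hub $z$ and the rim pairs $\{a_i,b_i\}$ for $i=1,\dots,k$, so $F_k$ has edges $za_i$, $zb_i$ and $a_ib_i$.

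\emph{Upper bound.} The $k$ rim edges $a_ib_i$ form a matching that saturates every rim vertex. The only unsaturated vertex is $z$, and all its neighbours are saturated. Hence this matching is maximal and $\mustar(F_k)\le k$.

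\emph{Lower bound.} Let $M$ be any maximal matching. Every edge incident to $z$ lies in a single triangle, so $M$ contains at most one hub edge, and that edge meets at most one rim pair $i_0$. Take any other triangle $i\ne i_0$. Neither $a_i$ nor $b_i$ is covered by a hub edge, and their only other neighbours are each other. If $a_ib_i\notin M$, both would be unsaturated and $a_ib_i$ could be added to $M$, contradicting maximality. So $M$ contains $a_ib_i$ for every $i\ne i_0$.
\begin{itemize}
\item If $M$ has a hub edge, it contributes one further edge, and $|M|\ge (k-1)+1=k$.
\item If $M$ has no hub edge, then all $k$ rim edges lie in $M$, and again $|M|\ge k$.
\end{itemize}
Therefore $\mustar(F_k)=k$. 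The case $k=1$ ($F_1=K_3$) fits this argument, since any single edge is a maximal matching.

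\emph{Harmonic index.} There are $2k$ hub edges, each joining degree $2k$ to degree $2$, so each has weight $2/(2k+2)=1/(k+1)$; together they give $2k/(k+1)$. There are $k$ rim edges, each joining degree $2$ to degree $2$, so each has weight $1/2$; together they give $k/2$. Summing, $H(F_k)=\frac{2k}{k+1}+\frac{k}{2}$.

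\emph{Comparison.} Divide $k\le \frac{2k}{k+1}+\frac k2$ by $k>0$ to get $\frac12\le \frac{2}{k+1}$. This is equivalent to $k+1\le 4$, i.e.\ $k\le 3$, with equality exactly at $k=3$.

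No step is really an obstacle. The only point needing care is the maximality argument in the lower bound, which is the same leg-by-leg reasoning used for $S_k$.
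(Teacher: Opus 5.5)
Your proof is correct and follows the paper's structure: the rim edges as a maximal matching for the upper bound, a maximality argument for the lower bound, the same edge count for $H(F_k)$, and the same reduction to $k \le 3$. The only difference is in the lower bound. The paper argues that each of the $k$ disjoint rim pairs must meet $M$ while each matching edge meets at most one pair. You instead force the rim edge of every triangle not touched by the hub edge of $M$, as in the proof for $S_k$ (there is at most one hub edge because all hub edges share $z$, not because each lies in a single triangle).
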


\begin{proof}
The hub is the only vertex shared by two triangles, and it is not a rim
vertex, so the $k$ rim pairs impose independent demands on any maximal
matching (Figure~\ref{fig:friendship}).  The $k$~rim
edges form a maximal matching of size~$k$, because every rim vertex is
saturated and so no spoke can extend the matching.  For the lower bound,
each rim edge $a_i b_i$ is the unique edge both of whose endpoints lie in
triangle~$i$'s rim, so a maximal matching must cover at least one endpoint
of~$a_i b_i$ for each~$i$, since otherwise $a_i b_i$ could be added to the
matching.  Each rim edge lies in one triangle, and a matching edge incident
to the hub saturates rim vertices in at most one triangle, so each matching
edge covers at most one of the $k$ disjoint rim pairs.  Hence at least $k$
matching edges are needed and $\mustar(F_k) = k$.  For the harmonic index,
each spoke joins the hub (degree~$2k$) to a rim vertex (degree~$2$),
contributing $2/(2k + 2) = 1/(k+1)$, and each rim edge joins two rim
vertices, contributing $2/(2+2) = 1/2$.  The $2k$ spokes and $k$ rim edges
give $H(F_k) = 2k/(k+1) + k/2$.  Rearranging, $\mustar(F_k) \le H(F_k)$
reduces to $k(k-3) \le 0$, which holds if and only if $k \le 3$.
\end{proof}

In particular, $F_4$ is a counterexample on nine vertices.  An exhaustive
enumeration of all non-isomorphic connected graphs on $n \le 8$ vertices,
produced by the program \texttt{geng} from the nauty
suite~\cite{McKay2014nauty}, confirms $\mustar(G) \le H(G)$ for every such
graph.  The connected-graph counts agree with
OEIS~A001349\footnote{\url{https://oeis.org/A001349}} at each order, a
total of 12{,}112 graphs for $n = 2, \ldots, 8$.  Nine vertices is therefore
the smallest order admitting a counterexample.  At $n = 9$ (261{,}080
graphs), exactly eight counterexamples appear, all with $\mustar = 4$ and
each having a unique vertex of maximum degree.  Among them $F_4$ is the
unique friendship graph, and it attains the smallest harmonic index
$H = 18/5$, giving the largest violation ratio
$\mustar/H = 10/9 \approx 1.111$.  Each of these eight graphs has maximum
degree at least five.  Harmonic indices are computed in exact
rational arithmetic, so every equality and every violation is decided
without rounding.  The saturation number is computed both through the
identity $\mustar(G) = i(L(G))$ and by direct enumeration of minimum maximal
matchings, and the two methods agree on every
graph.

For all $k \ge 1$, $\mustar(F_k)/H(F_k) = 2(k+1)/(k+5) < 2$.  Enlarging
the single hub into an independent set of $m$ hubs removes this ceiling.

\begin{definition}[Generalized windmill]\label{def:Gmk}
For integers $m \ge 1$ and $k \ge 1$, the graph $G_{m,k}$ is the complete
bipartite graph $K_{m,2k}$ together with a perfect matching on the $2k$-side.
Since $K_{m,2k}$ is a connected spanning subgraph of $G_{m,k}$, the latter is
connected.
Equivalently, $G_{m,k}$ has $m$~hub vertices, each joined to all $2k$~blade
vertices, together with $k$~blade edges pairing the blade
vertices~(Figure~\ref{fig:windmill}).  When
$m = 1$ the blade edges are the rim edges of the friendship family, and
$G_{1,k} = F_k$.
\end{definition}

\begin{figure}[ht]
\centering
\begin{tikzpicture}[
  v/.style={circle,fill=black,inner sep=1.6pt},
  medge/.style={line width=1.2pt},
  jedge/.style={gray!55,line width=0.4pt},
  scale=0.8,
]
\coordinate (z1) at (-0.6,0);
\coordinate (z2) at (0.6,0);
\coordinate (a1) at (0.55,2.1);
\coordinate (b1) at (-0.55,2.1);
\coordinate (a2) at (-2.07,-0.62);
\coordinate (b2) at (-1.57,-1.48);
\coordinate (a3) at (1.57,-1.48);
\coordinate (b3) at (2.07,-0.62);
\fill[blue!9] (b1) -- (a1) -- (z2) -- (z1) -- cycle;
\foreach \x in {a1,b1,a2,b2,a3,b3} {
  \draw[jedge] (\x) -- (z1);
  \draw[jedge] (\x) -- (z2);
}
\draw[medge] (a1) -- (b1);
\draw[medge] (a2) -- (b2);
\draw[medge] (a3) -- (b3);
\draw[blue!70!black,line width=1.0pt] (a1) -- (b1);
\draw[blue!70!black,line width=0.8pt] (b1) -- (z1);
\draw[blue!70!black,line width=0.8pt] (b1) -- (z2);
\draw[blue!70!black,line width=0.8pt] (a1) -- (z1);
\draw[blue!70!black,line width=0.8pt] (a1) -- (z2);
\foreach \x in {a2,b2,a3,b3} { \node[v] at (\x) {}; }
\node[v,label=right:$a_1$] at (a1) {};
\node[v,label=left:$b_1$] at (b1) {};
\node[v,label=below:$z_1$] at (z1) {};
\node[v,label=below:$z_2$] at (z2) {};
\end{tikzpicture}
\caption{The generalized windmill $G_{2,3}$ with $m = 2$ and $k = 3$.  Two
independent hubs $z_1, z_2$, each joined to all $2k = 6$ blade vertices, with
a perfect matching pairing the blades into $k = 3$ edges (thick).  The shaded
blade together with the two hubs forms $K_4$ minus the hub edge.  The case
$m = 1$ is shown in Figure~\ref{fig:friendship}.}
\label{fig:windmill}
\end{figure}

\begin{theorem}[Unbounded separation]\label{thm:unbounded}
For every $m \ge 1$ and $k \ge 1$,
\[
  \mustar(G_{m,k}) = k
  \qquad\text{and}\qquad
  H(G_{m,k}) = \frac{k}{m+1} + \frac{4km}{2k+m+1}.
\]
Consequently, $\lim_{k \to \infty} \mustar(G_{m,k})/H(G_{m,k}) = m+1$.
\end{theorem}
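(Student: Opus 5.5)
The proof splits into three parts: the saturation number, the harmonic index, and the limit. It follows the pattern of Theorem~\ref{thm:Fk}.

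\emph{Upper bound on $\mustar$.} The $k$ blade edges form a matching that saturates every blade vertex. Every other edge of $G_{m,k}$ joins a hub to a blade vertex, and the hubs are pairwise nonadjacent, so no edge can be added. This matching is therefore maximal, and $\mustar(G_{m,k}) \le k$.

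\emph{Lower bound on $\mustar$.} Let $M$ be a maximal matching. For each blade pair $\{a_i,b_i\}$, maximality forces $M$ to saturate $a_i$ or $b_i$, since otherwise $a_ib_i$ could be added. Every edge of $M$ has at least one blade endpoint. A blade edge touches exactly one pair, and a hub--blade edge touches exactly one blade vertex. Hence each edge of $M$ meets at most one of the $k$ disjoint pairs, which gives $|M| \ge k$.

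I expect this lower bound to be the only step needing care. The point that must be made explicitly is that, unlike in the friendship graph, there are now up to $m$ hub edges. Each of them still covers only a single blade vertex, so none can cover two pairs at once.

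\emph{Harmonic index.} In $G_{m,k}$ each hub has degree $2k$, and each blade vertex has degree $m+1$: it is adjacent to the $m$ hubs and to its partner. There are $2km$ hub--blade edges, each of weight $2/(2k+m+1)$. There are $k$ blade edges, each of weight $2/(2m+2) = 1/(m+1)$. Summing gives
\[
  H(G_{m,k}) = \frac{k}{m+1} + \frac{4km}{2k+m+1}.
\]

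\emph{Limit.} Divide numerator and denominator of the ratio by $k$:
\[
  \frac{\mustar(G_{m,k})}{H(G_{m,k})} = \frac{1}{\dfrac{1}{m+1} + \dfrac{4m}{2k+m+1}}.
\]
As $k \to \infty$, the second term in the denominator tends to $0$ with $m$ fixed. The ratio therefore tends to $m+1$.
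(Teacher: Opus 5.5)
Your proposal is correct and follows essentially the same route as the paper's proof. The blade edges give a maximal matching of size $k$, and maximality forces every blade pair to be met. The harmonic index and the limit are then computed exactly as you do, and your explicit remark that each matching edge (blade or hub--blade) meets at most one of the disjoint blade pairs fills in a step the paper leaves implicit in its one-line lower bound.
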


\begin{proof}
Any
maximal matching must intersect every blade pair, since otherwise the blade
edge could be added, so $\mustar \ge k$.  The $k$~blade edges form a
maximal matching of size~$k$, since every blade vertex is then saturated by
a blade edge and no edge between a hub and a blade vertex can extend the
matching, so $\mustar(G_{m,k}) = k$.  For the harmonic index, each blade edge joins two
degree-$(m+1)$ vertices, contributing $2/[2(m+1)] = 1/(m+1)$, and each of
the $2km$ edges between a hub and a blade vertex joins a degree-$2k$ hub to
a degree-$(m+1)$ blade vertex, contributing $2/(2k+m+1)$.  Summing gives the
claimed closed form.  Dividing,
$\mustar/H = k/[k/(m{+}1) + 4km/(2k{+}m{+}1)]
= 1/[1/(m{+}1) + 4m/(2k{+}m{+}1)]$, which tends to $m+1$ as
$k \to \infty$.
\end{proof}

\begin{corollary}\label{cor:unbounded}
For every constant $c > 0$ there exists a connected graph~$G$ with
$\mustar(G) > c \cdot H(G)$.
\end{corollary}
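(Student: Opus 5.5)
This corollary follows directly from Theorem~\ref{thm:unbounded}. Given $c > 0$, I first fix an integer $m \ge 1$ with $m + 1 > c$, for instance $m = \lceil c \rceil$. Theorem~\ref{thm:unbounded} then gives
\[
  \frac{\mustar(G_{m,k})}{H(G_{m,k})} = \frac{1}{\frac{1}{m+1} + \frac{4m}{2k+m+1}} \longrightarrow m+1 > c
  \qquad (k \to \infty).
\]
By the definition of a limit, there is some $k$ for which the ratio exceeds $c$. For that $k$, set $G = G_{m,k}$. Definition~\ref{def:Gmk} records that $G_{m,k}$ is connected, since it contains $K_{m,2k}$ as a spanning subgraph. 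It is also nontrivial, and $H(G) > 0$ because $G$ has edges. Multiplying the ratio inequality by $H(G)$ therefore gives $\mustar(G) > c \cdot H(G)$.

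If an explicit $k$ is preferred, I would solve $\frac{1}{m+1} + \frac{4m}{2k+m+1} < \frac1c$ directly. This is possible because $\frac1c - \frac1{m+1} > 0$, and any $k$ with
\[
  2k + m + 1 > \frac{4m}{\frac1c - \frac1{m+1}}
\]
works.

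No step is a genuine obstacle. The only point to check is that the strict inequality $m+1 > c$ leaves room for the limit argument, and the choice of $m$ guarantees this.
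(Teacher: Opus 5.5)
Your proposal is correct and follows essentially the same route as the paper. You pick $m$ with $m+1>c$ and invoke Theorem~\ref{thm:unbounded} to get $\mustar(G_{m,k})/H(G_{m,k}) \to m+1 > c$, then take $k$ large; the paper does the same with an integer $m > c$, and your explicit threshold for $k$ is a harmless, correct addition.
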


\begin{proof}
Given $c > 0$, fix an integer $m > c$.  By Theorem~\ref{thm:unbounded},
$\mustar(G_{m,k})/H(G_{m,k}) \to m + 1$ as $k \to \infty$, and $m + 1 > c$, so
$\mustar(G_{m,k}) > c \cdot H(G_{m,k})$ for all sufficiently large~$k$.
\end{proof}

\begin{remark}\label{rem:biyikoglu}
The generalized windmill coincides with the construction of
B\i y\i ko\u{g}lu~\cite{Biyikoglu2026note}.  The join of $k$~disjoint edges
with an independent set of $m$~vertices is exactly $G_{m,k}$, with the $m$
independent vertices serving as the hubs and the $k$~disjoint edges as the
blade pairs.  Theorem~\ref{thm:unbounded} reproves the unbounded separation
and adds the exact limiting ratio $m+1$.
\end{remark}

In both families the $k$ edges of a minimum maximal matching determine the
limiting ratio, which equals the reciprocal of their common harmonic weight.
For the subdivided star these are the outer edges, each ending in a leaf with
weight $2/3$, so the limit is $3/2$.  For the generalized windmill they are
the blade edges, joining two degree-$(m+1)$ vertices with weight $1/(m+1)$,
so the limit $m+1$ grows without bound.  The saturation number can thus exceed
the harmonic index by any factor, yet it cannot fall far below it.

\begin{theorem}[Lower bound]\label{thm:lower}
Every graph $G$ with at least one edge satisfies $H(G) < 4\mustar(G)$, and the
constant~$4$ is best possible.
\end{theorem}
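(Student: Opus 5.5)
The plan is to fix a minimum maximal matching and charge every edge of $G$ to a vertex covered by it. Let $M$ be a maximal matching with $|M| = \mustar(G)$, and let $S$ be the set of $2\mustar(G)$ vertices saturated by $M$.

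\emph{Step 1: $S$ is a vertex cover.} By maximality of $M$, no edge of $G$ has both endpoints outside $S$, since such an edge could be added to $M$. So every edge has at least one endpoint in $S$.

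\emph{Step 2: assign edges to $S$.} Assign each edge $uv$ to one endpoint lying in $S$, choosing arbitrarily if both lie in $S$. A vertex $v \in S$ then receives at most $d(v)$ edges. Each assigned edge $vw$ has weight
\[
\frac{2}{d(v)+d(w)} \le \frac{2}{d(v)+1},
\]
because $d(w) \ge 1$. Note that $d(v) \ge 1$ for every $v \in S$, as $v$ is saturated.

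\emph{Step 3: sum the contributions.} Summing over $S$ gives
\[
H(G) \le \sum_{v \in S} \frac{2d(v)}{d(v)+1} < \sum_{v\in S} 2 = 2|S| = 4\mustar(G).
\]
The inequality is strict because each summand is strictly less than $2$ and $S$ is nonempty. Isolated vertices of $G$ contribute nothing, so the argument applies to any graph with an edge, connected or not. This step is routine; the only delicate point is that the bound $2d(v)/(d(v)+1)$ must not be lost by double counting, and assigning each edge to exactly one endpoint in $S$ handles this.

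\emph{Step 4: sharpness.} The main thing to get right is which family attains the constant. The bound is nearly tight only if each vertex of $S$ has large degree with almost all of its edges going to leaves, and only if $|S| = 2$ with $\mustar = 1$. This points to the balanced double star: two adjacent centres $u, v$, each joined to $k$ leaves. The central edge $uv$ alone is a maximal matching, so $\mustar = 1$. The $2k$ leaf edges each have weight $2/(k+2)$, and the central edge has weight $2/(2k+2)$, so
\[
H = \frac{4k}{k+2} + \frac{1}{k+1} \to 4.
\]
Hence $H/\mustar \to 4$, and no constant smaller than $4$ works.
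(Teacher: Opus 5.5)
Your proof is correct and follows essentially the same route as the paper: the vertex set $S$ of a minimum maximal matching is a vertex cover, each edge is charged to one endpoint in $S$, each $v \in S$ absorbs weight at most $2d(v)/(d(v)+1) < 2$, and the balanced double stars $D_k$ give sharpness. One small correction to your motivating remark in Step~4: near-tightness does not force $|S| = 2$ and $\mustar = 1$. For example, two copies of $D_k$ joined by an edge between a centre of one and a centre of the other have $\mustar = 2$ and $H/\mustar \to 4$. This does not affect the proof, since that remark is only heuristic.
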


\begin{proof}
Let $M$ be a maximal matching with $|M| = \mustar(G)$, and let $S = V(M)$ be the
set of $2\mustar(G)$ vertices it saturates.  Maximality makes $S$ a vertex
cover, since an edge with neither endpoint in $S$ could be added to $M$.  Assign
to each edge~$xy$ one saturated endpoint $\varphi(xy) \in S$.  Grouping the
harmonic sum by $\varphi$ and then dropping the restriction in favour of all
edges at each vertex,
\[
  H(G) = \sum_{v \in S}\ \sum_{\varphi(xy) = v} \frac{2}{d(x)+d(y)}
       \ \le\ \sum_{v \in S}\ \sum_{u \sim v} \frac{2}{d(u)+d(v)} .
\]
For a fixed $v$, every neighbour $u$ has $d(u) \ge 1$, so
$2/(d(u)+d(v)) \le 2/(d(v)+1)$, and summing over the $d(v)$ neighbours of $v$,
\[
  \sum_{u \sim v} \frac{2}{d(u)+d(v)} \ \le\ \frac{2\,d(v)}{d(v)+1} \ <\ 2 .
\]
Hence $H(G) < \sum_{v \in S} 2 = 2|S| = 4\mustar(G)$.

For best-possibility, let $D_k$ be the balanced double star: two adjacent
centres, each joined to $k$ leaves, on $n = 2k+2$ vertices.  Its central edge
forms a maximal matching, every other edge being a pendant edge at a centre, so
$\mustar(D_k) = 1$.  Each centre has degree $k+1$, so the $2k$ pendant edges
each contribute $2/(k+2)$ and the central edge contributes $1/(k+1)$, giving
\[
  H(D_k) = \frac{4k}{k+2} + \frac{1}{k+1} \longrightarrow 4
  \qquad (k \to \infty).
\]
Thus $H(D_k)/\mustar(D_k) \to 4$, so no smaller constant suffices.
\end{proof}

Combined with the tree bound $\mustar(T) < \tfrac{3}{2} H(T)$ of
Theorem~\ref{thm:treebound}, this pins the saturation number of a nontrivial
tree between two harmonic multiples,
\[
  \tfrac{1}{4} H(T) < \mustar(T) < \tfrac{3}{2} H(T),
\]
both constants best possible: the subdivided stars approach the upper one and
the double stars the lower.

\section{Regular graphs}\label{sec:regular}

Every counterexample above has vertices of unequal degrees.  The friendship
graph $F_k$ has hub degree~$2k$ and rim degree~$2$, the windmill $G_{m,k}$
has hub degree~$2k$ and blade degree~$m+1$, and the subdivided star~$S_k$
has degrees $k$, $2$, and~$1$.  When all degrees are equal, the conjecture
holds.

\begin{theorem}[Regular graphs]\label{thm:regular}
If $G$ is a connected graph in which every vertex has degree~$r$
($r \ge 1$), then $H(G) = n/2$, and consequently
$\mustar(G) \le \lfloor n/2 \rfloor \le H(G)$.
\end{theorem}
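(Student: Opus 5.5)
The argument is a direct computation followed by one elementary inequality, so I will carry it out in three short steps.

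\emph{Step 1: the harmonic index.} Every edge joins two vertices of degree~$r$, so each contributes $2/(r+r)=1/r$. By the handshake lemma, $G$ has $|E(G)| = nr/2$ edges. Summing gives
\[
  H(G) = \frac{nr}{2}\cdot\frac{1}{r} = \frac{n}{2}.
\]
Connectivity plays no role here; $r \ge 1$ is used only so that the weight $1/r$ is defined.

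\emph{Step 2: the saturation number.} A maximal matching is in particular a matching. Its edges are pairwise disjoint and each uses two vertices, so every matching has at most $\lfloor n/2\rfloor$ edges. Hence
\[
  \mustar(G) \le \nu(G) \le \lfloor n/2\rfloor,
\]
the same comparison $\mustar \le \nu$ used at the start of the proof of Theorem~\ref{thm:treebound}.

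\emph{Step 3: the final inequality.} The relation $\lfloor n/2\rfloor \le n/2 = H(G)$ is immediate, and chaining it with Step~2 gives the claim.

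There is no real obstacle. The only point to state explicitly is that the degree sum $nr$ is even, so $|E(G)| = nr/2$ is an integer. This is consistent with $H(G)=n/2$ possibly being a half-integer when $n$ is odd (which forces $r$ even); in that case the floor makes the inequality $\lfloor n/2\rfloor \le H(G)$ strict.
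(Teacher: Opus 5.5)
Your proof is correct and matches the paper's argument essentially step for step: each of the $nr/2$ edges has weight $1/r$, which gives $H(G)=n/2$, and $\mustar(G)\le\lfloor n/2\rfloor$ because every matching has at most $\lfloor n/2\rfloor$ edges. Your side remarks are also accurate: connectivity is unused, and the final inequality is strict when $n$ is odd.
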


\begin{proof}
Every edge $\{u,v\}$ has $d(u) + d(v) = 2r$, so each edge contributes
$2/(2r) = 1/r$ to the harmonic index.  The graph has $nr/2$ edges, so
$H(G) = (nr/2)/r = n/2$.  Since every matching has at most
$\lfloor n/2 \rfloor$ edges, $\mustar(G) \le \lfloor n/2 \rfloor \le n/2
= H(G)$.
\end{proof}

Equality $\mustar(G) = H(G) = n/2$ forces $n$ even and every maximal
matching to be perfect.  Such graphs are called randomly matchable.
Sumner~\cite{Sumner1979randomly} proved that the only connected randomly
matchable graphs are the complete graph~$K_{2t}$ and the balanced complete
bipartite graph~$K_{t,t}$.  An exhaustive verification on all 54 connected
regular graphs with at most nine vertices confirms that $H = n/2$ holds
exactly, that the conjecture holds for each, and that equality
$\mustar = n/2$ occurs precisely for $K_2, K_4, C_4 \cong K_{2,2}, K_6,
K_{3,3}, K_8, K_{4,4}$.  These graphs are all of the form $K_{2t}$ or
$K_{t,t}$, consistent with Sumner's theorem.

\section{Graphs of maximum degree four}\label{sec:subquartic}

An exhaustive search finds no counterexample to $\mustar \le H$ of maximum
degree at most four on up to eleven vertices, in contrast to the smallest
counterexamples without that restriction, which appear already on nine
vertices (Section~\ref{sec:general}).  This suggests the inequality holds throughout the
class of \emph{subquartic} graphs, those of maximum degree at most four.

\begin{conjecture}[Subquartic graphs]\label{conj:subquartic}
Every connected graph of maximum degree at most four satisfies
$\mustar(G) \le H(G)$.
\end{conjecture}

Among the graphs of maximum degree at most four on at most nine vertices,
equality $\mustar = H$ holds for exactly six: the complete graphs $K_2$ and
$K_4$, the four-cycle $C_4$, the complete bipartite graphs $K_{3,3}$ and
$K_{4,4}$, and the subdivided star~$S_4$.  The first five are regular and of the
form $K_{2t}$ or $K_{t,t}$ (Theorem~\ref{thm:regular}), while the subdivided
star~$S_4$ is the only non-regular extremal graph.  A proof of
Conjecture~\ref{conj:subquartic} must therefore meet equality both at the
regular graphs of Theorem~\ref{thm:regular} and at the subdivided star~$S_4$.

\section{Discussion}\label{sec:discussion}

Both invariants, the saturation number and the harmonic index, carry meaning
outside graph theory.  The
harmonic index arose as a molecular descriptor and is used in chemical graph
theory to relate the structure of a compound to its
properties~\cite{Fajtlowicz1987harmonic, Zhong2012harmonic}.  The saturation
number has a separate, physical reading.  A maximal matching models a jammed
state of diatomic molecules adsorbed onto a substrate, each molecule occupying
a pair of adjacent sites, and the saturation number is the size of the smallest
such state, a measure of the worst-case inefficiency of
adsorption~\cite{Ahmadi2017saturation}.  The inequality $\mustar \le H$ and the
bounds proved above thus connect two quantities of independent applied origin.

The two quantities differ sharply in computational cost.  The harmonic index is
computed in linear time, by summing one term over each edge.  The saturation
number, equal to the independent domination number of the line graph
(Section~\ref{sec:intro}), is NP-hard to compute, and remains so on planar
graphs of maximum degree three~\cite{Ahmadi2017saturation}.  The bounds
established above therefore estimate an NP-hard quantity by a linear-time one.
The universal inequality $H(G) < 4\mustar(G)$ yields the lower bound
$\mustar(G) > \tfrac14 H(G)$ for every graph.  On trees, where the saturation
number is itself computable in polynomial time, the two-sided bound
$\tfrac14 H(T) < \mustar(T) < \tfrac32 H(T)$ confines it to within a constant
factor of the harmonic index, both constants best possible.  The generalized
windmills show that the upper bound does not extend to all graphs,
since a single high-degree hub drives $\mustar/H$ above any constant.

These bounds are structural, not algorithmic, and do not by themselves improve
adsorption models or domination algorithms.  Their interest lies in that a
linear-time degree quantity brackets an NP-hard matching invariant, sharply on
trees and from below on every graph, and that the bracket fails outside the tree
class in a controlled way, through vertices of high degree.

\section{Conclusion}\label{sec:conclusion}

TxGraffiti's conjecture is false, but its failure is structured.  A connected graph
violates $\mustar \le H$ through the edges that a maximal matching is forced
to use, and the size of the violation tracks the harmonic weight of
those edges.  When the forcing edges have high-degree endpoints, as in the
generalized windmill, their weight vanishes and $\mustar/H$ grows without
bound, so no degree-independent correction $\mustar \le c\,H$ can hold
(Corollary~\ref{cor:unbounded}).  The reverse inequality needs no such
correction: each saturated vertex absorbs harmonic weight below~$2$, so
$H(G) < 4\mustar(G)$ for every graph with an edge (Theorem~\ref{thm:lower}), and the
harmonic index bounds the saturation number from below universally.  When
every degree is equal the bound holds outright
(Theorem~\ref{thm:regular}), and on trees, where the forcing edges end in
leaves, it holds up to the constant~$3/2$
(Theorem~\ref{thm:treebound}), which is best possible.  An upper bound valid
for every connected graph must therefore temper the harmonic index by a
measure of degree disparity.  Identifying the right measure remains open.

\bibliographystyle{alpha}
\bibliography{references}

\end{document}